\documentclass[a4paper,10pt,twoside]{amsart}
\usepackage{amsthm}
\usepackage{amsmath}
\usepackage{amssymb}
\usepackage{amsfonts}
\usepackage{amscd}
\usepackage[english]{babel}
\usepackage{stmaryrd}
\usepackage{enumerate}

\newtheorem{thm}{Theorem}
\newtheorem{cor}{Corollary}
\newtheorem{prop}{Proposition}
\newtheorem{rem}{Remark}

\newtheorem{lem}{Lemma}

\newtheorem*{thm2}{Theorem}

\newtheorem*{exam2}{Example}

\begin{document}

    \title{Common invariant subspace and commuting matrices}
    \author{Gerald BOURGEOIS}
    
    \address{G\'erald Bourgeois, GAATI, Universit\'e de la polyn\'esie fran\c caise, BP 6570, 98702 FAA'A, Tahiti, Polyn\'esie Fran\c caise.}
    \email{bourgeois.gerald@gmail.com}
        
  \subjclass[2010]{Primary 15A27, 47A15}
    \keywords{Common invariant subspace. Galois group. Commutant. Simultaneously triangularizable.}

\begin{abstract}
 Let $K$ be a perfect field, $L$ be an extension field of $K$ and $A,B\in\mathcal{M}_n(K)$. If $A$ has $n$ distinct eigenvalues in $L$ that are explicitly known, then we can check if $A,B$ are simultaneously triangularizable over $L$.
 Now we assume that $A,B$ have a common invariant proper vector subspace of dimension $k$ over an extension field of $K$ and that $\chi_A$, the characteristic polynomial of $A$, is irreducible over $K$.  Let $G$ be the Galois group of $\chi_A$. We show the following results
\begin{enumerate}
\item[(i)] If $k\in\{1,n-1\}$, then $A,B$ commute. 
\item[(ii)] If $1\leq k\leq n-1$ and $G=\mathcal{S}_n$ or $G=\mathcal{A}_n$, then $AB=BA$.
\item[(iii)] If $1\leq k\leq n-1$ and $n$ is a prime number, then $AB=BA$.
\end{enumerate}
Yet, when $n=4,k=2$, we show that $A,B$ do not necessarily commute if $G$ is not $\mathcal{S}_4$ or $\mathcal{A}_4$. Finally we apply the previous results to solving a matrix equation.
 \end{abstract}

\maketitle

    \section{Introduction}

  Throughout this paper, $K$ denotes a perfect field, and $\overline{K}$ an algebraic closure of $K$. Recall that a field $K$ is said to be perfect if every irreducible polynomial over $K$ has only simple roots in $\overline{K}$.
  
    \smallskip
    
    For $M\in\mathcal{M}_n(K)$, the set of $n\times{n}$ matrices with entries in $K$, $\sigma{(}M)$ denotes its spectrum, that is the set of its eigenvalues in $\overline{K}$. Two matrices $A,B\in\mathcal{M}_n(K)$ are said to be simultaneously triangularizable (denoted by ST) over $K$ if there exists a matrix $P\in{G}L_n(K)$ such that $P^{-1}AP$ and $P^{-1}BP$ are upper triangular. Thus such matrices have common invariant subspaces that form a complete flag over $K$. Note that if $A,B\in\mathcal{M}_n(K)$ commute, then they are ST over an extension field of $K$. In the sequel, $L$ denotes an extension field of $K$.
    
    \medskip  
    
 In Section 2, we consider $A,B\in\mathcal{M}_n(K)$ and we assume that $A$ has $n$ distinct eigenvalues in $L$, an extension field of $K$, and that $\sigma(A)$ is explicitly known. We give an algorithm which allows to check whether or not $A$ and $B$ are ST over $L$. Moreover, when $A$ and $B$ are ST we obtain a basis of $L$ that diagonalizes $A$ and triangularizes $B$. \\
 \indent  In Section 3, we assume that $A,B\in\mathcal{M}_n(K)$ have a common invariant proper vector subspace of dimension $k$ over $L$. We recall some criteria for the existence of common invariant proper subspaces of matrices. Shemesh gives this efficient criterion, when $k=1$, in \cite{1}
 \begin{thm2} Let $A,B\in\mathcal{M}_n(\mathbb{C})$. Then $A$ and $B$ have a common eigenvector if and only if 
 $$\bigcap_{p,q=1}^{n-1}\ker([A^p,B^q])\not=\{0\}.$$
  \end{thm2} 
  Note that the complexity of this test is in $O(n^5)$.\\
   When $k\geq 2$, a particular case, that is sufficient for our purpose, is treated in \cite{7},\cite{6} as follows. If $U\in\mathcal{M}_n(K)$, then $U^{(k)}$ denotes its $k^{th}$ compound. 
 \begin{thm} \label{tsatso}
Let $A,B\in \mathcal{M}_n(\mathbb{C})$, $k\in\llbracket{2},n-1\rrbracket$ be such that $A$ has distinct eigenvalues.
The following are equivalent
\begin{enumerate}
\item[(i)] $A,B$ have a common invariant subspace $W$ of dimension $k$.
\item[(ii)]  There exists $s\in\mathbb{C}$ such that $(A+sI_n)^{(k)}$ has distinct eigenvalues and $(A+sI_n)^{(k)},(B+sI_n)^{(k)}$ are invertible and have a common eigenvector in $\mathbb{C}^{\binom{n}{k}}$. Moreover this eigenvector is decomposable in the exterior product of $k$ vectors that constitute a basis of $W$.
\end{enumerate}
 \end{thm}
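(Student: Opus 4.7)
The plan is to recast the problem in exterior algebra by interpreting $U^{(k)}$ as the linear endomorphism induced by $U$ on $\Lambda^{k}\mathbb{C}^{n}$. Two classical correspondences will do the work: (a) if $U$ is diagonalizable with eigenbasis $(e_{i})$ and eigenvalues $(\lambda_{i})$, then $U^{(k)}$ has eigenbasis $\{e_{i_{1}}\wedge\cdots\wedge e_{i_{k}}:i_{1}<\cdots<i_{k}\}$ with eigenvalues $\lambda_{i_{1}}\cdots\lambda_{i_{k}}$; and (b) a nonzero decomposable vector $w_{1}\wedge\cdots\wedge w_{k}$ is an eigenvector of $U^{(k)}$ with \emph{nonzero} eigenvalue if and only if $W=\mathrm{span}(w_{1},\ldots,w_{k})$ is $U$-invariant, in which case the eigenvalue equals $\det(U|_{W})$.

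For the direction (i) $\Rightarrow$ (ii), note that if $W$ is a common invariant subspace then any decomposable representative $v\in\Lambda^{k}\mathbb{C}^{n}$ of $W$ is automatically a common eigenvector of $A^{(k)}$ and $B^{(k)}$, and hence of $(A+sI_{n})^{(k)}$ and $(B+sI_{n})^{(k)}$ for every $s$. I would then choose $s\in\mathbb{C}$ outside three finite exceptional sets: $s\notin-\sigma(A)$ and $s\notin-\sigma(B)$ (ensuring invertibility of both compounds), and $s$ avoiding the zeros of the polynomials $\prod_{i\in I}(\lambda_{i}+s)-\prod_{j\in J}(\lambda_{j}+s)$ taken over all distinct $k$-subsets $I,J$ of the eigenvalues of $A$. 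Because the $\lambda_{i}$ are distinct, each such difference is a nontrivial polynomial of degree $\leq k-1$, so the union of bad $s$ is finite.

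For the main direction (ii) $\Rightarrow$ (i), let $v$ be the given common eigenvector. Since $(A+sI_{n})^{(k)}$ has distinct eigenvalues and $A+sI_{n}$ is diagonalizable, observation (a) shows that each of its one-dimensional eigenspaces is spanned by a standard decomposable vector built from an eigenbasis of $A$; hence $v$ is necessarily decomposable, giving a $k$-dimensional subspace $W$. The assumed invertibility of $(A+sI_{n})^{(k)}$ and $(B+sI_{n})^{(k)}$ ensures both eigenvalues attached to $v$ are nonzero, so (b) yields invariance of $W$ under $A+sI_{n}$ and $B+sI_{n}$, and therefore under $A$ and $B$.

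The main obstacle is the nontrivial direction of fact (b): showing that a decomposable eigenvector of $U^{(k)}$ with nonzero eigenvalue forces invariance of the associated subspace. The plan here is to recover $W$ intrinsically as $W=\{u\in\mathbb{C}^{n}:u\wedge v=0\}$ and to invoke the standard identity $U^{(k+1)}(u\wedge v)=U(u)\wedge U^{(k)}(v)$ for $u\in\mathbb{C}^{n}$ and $v\in\Lambda^{k}\mathbb{C}^{n}$. For $u\in W$ and $U^{(k)}v=\lambda v$ with $\lambda\neq 0$, this gives $0=U^{(k+1)}(u\wedge v)=\lambda\,U(u)\wedge v$, hence $U(u)\in W$. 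Once this lemma is in hand, the two directions assemble cleanly as above.
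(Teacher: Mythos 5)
The paper states Theorem~\ref{tsatso} without proof: it is quoted from George--Ikramov \cite{7} and Tsatsomeros \cite{6}, so there is no in-paper argument to compare against. Your proof is correct and is, in substance, the standard one underlying those references, phrased cleanly in the language of exterior powers rather than of compound matrices acting on $\mathbb{C}^{\binom{n}{k}}$; the two viewpoints are canonically the same. The three pillars you use are exactly right: (a) for diagonalizable $U$ the $k$-th compound is diagonalized by the wedges $e_{i_1}\wedge\cdots\wedge e_{i_k}$ with eigenvalues $\prod\lambda_{i_j}$; (b) a nonzero decomposable $v=w_1\wedge\cdots\wedge w_k$ is an eigenvector of $U^{(k)}$ with nonzero eigenvalue if and only if $\operatorname{span}(w_1,\ldots,w_k)$ is $U$-invariant; and the recovery $W=\{u\in\mathbb{C}^n:u\wedge v=0\}$ together with $U^{(k+1)}(u\wedge v)=U(u)\wedge U^{(k)}(v)$ for the hard half of (b). The genericity step for $s$ is also handled correctly: the degree-$k$ terms of $\prod_{i\in I}(\lambda_i+s)$ and $\prod_{j\in J}(\lambda_j+s)$ cancel, and for $I\neq J$ the difference is not identically zero because identical polynomials in $s$ would force the multisets $\{-\lambda_i\}_{i\in I}$ and $\{-\lambda_j\}_{j\in J}$ to coincide, contradicting the simplicity of the spectrum of $A$; adding the finitely many $s$ with $s\in-\sigma(A)\cup-\sigma(B)$ still leaves a cofinite set of admissible $s$. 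The only point worth making explicit when writing this up is that in the direction (ii)~$\Rightarrow$~(i), invertibility of $(B+sI_n)^{(k)}$ is precisely what guarantees that the eigenvalue of $v$ under $(B+sI_n)^{(k)}$ is nonzero, so that (b) applies to $B+sI_n$ even though $B$ need not be diagonalizable; you do use this, but it is the one place where the hypothesis on $B$'s compound is load-bearing and deserves a sentence.
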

 We can show that the complexity of this test is at most (when $k=n/2$) in $O(\dfrac{2^{5n}}{n^{5/2}})$.
 \begin{rem}  The previous two results are also valid over a field that is algebraically closed and has characteristic $0$. 
 \end{rem}
 
   \medskip
 
  In the sequel, we work on a perfect field $K$ and we will use the following notation
 
 \medskip
  
\textbf{Notation}. Let $P\in K[x]$ be an irreducible polynomial of degree $n$. The splitting field $S_P$ of $P$ is $K(x_1,\cdots,x_n)$ where $x_1,\cdots,x_n$ are the roots of $P$ in $\overline{K}$.
 The Galois group of $P$ is the set of the $K$-automorphisms of $S_P$, that is 
 $$Gal(S_P/K)=\{\tau\in Aut(S_P)\;|\;\forall{t}\in{K},\tau(t)=t\};$$
  it is isomorphic to a subgroup of $\mathcal{S}_n$, the group of all the permutations of $\{1,\cdots,n\}$. If $M\in\mathcal{M}_n(K)$ and $\chi_M$, the characteristic polynomial of $M$, is irreducible, then $G_M$ denotes the Galois group of $\chi_M$.
 
\medskip

  \indent Assume that $A,B\in\mathcal{M}_n(K)$ have a common invariant proper subspace of dimension $k$ over an extension field $L$ of $K$ and that $\chi_A$ is irreducible over $K$. 
  We consider conditions that imply that $A,B$ commute. We show the following results.
\begin{enumerate}
\item[(i)] If $k\in\{1,n-1\}$, then $A,B$ commute. 
\item[(ii)] If $1\leq k\leq n-1$ and $G_A=\mathcal{S}_n$ or $G_A=\mathcal{A}_n$, then $AB=BA$.
\item[(iii)] If $1\leq k\leq n-1$ and $n$ is a prime number, then $AB=BA$.
\end{enumerate}
    The idea is as follows : let $F=[u_1,\cdots,u_k]$ be a $A$-invariant vector space where the $(u_i)_{i\leq k}$ are eigenvectors of $A$ associated to the eigenvalues $E=(\alpha_i)_{i\leq k}\subset\sigma(A)$. We seek elements of $G_A$ so that their orbits contain elements of $E$ and elements of $\sigma(A)\setminus E$. We consider $Bu_1\in F$ and we show that it is colinear to $u_1$.\\
In Section 4, we consider the case when $n=4,k=2$ and we show that the conclusion of (ii) may be false if we drop the hypothesis $G_A=\mathcal{S}_4$ or $G_A=\mathcal{A}_4$.\\
In Section 5, we use (i) and the simultaneous triangularization to solving the matrix equation $AX-XA=X^{\alpha}$ in a particular case.
 
 \section{An algorithm checking ST property}
 \begin{prop} \label{diag} Let $A,B\in\mathcal{M}_n(K)$  that are ST over $L$, an extension field of $K$. We assume that $A$ has distinct eigenvalues over $L$. Then there exists $S\in{G}L_n(L)$ such that $S^{-1}AS$ is diagonal and $S^{-1}BS$ is upper triangular.
 \end{prop}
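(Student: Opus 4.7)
The plan is to exploit the upper-triangular form provided by the ST hypothesis together with the distinct-eigenvalue assumption on $A$ to build an upper-triangular change of basis that diagonalizes $A$ without destroying the triangular shape of $B$. Concretely, I will produce an auxiliary upper-triangular matrix $Q$ and set $S:=PQ$, where $P$ is the matrix given by the ST property.

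First, by the hypothesis there exists $P\in GL_n(L)$ such that $T_A:=P^{-1}AP$ and $T_B:=P^{-1}BP$ are both upper triangular over $L$. Write the diagonal entries of $T_A$ as $\lambda_1,\ldots,\lambda_n$; these are the eigenvalues of $A$, hence pairwise distinct in $L$ by assumption.

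Next I construct an eigenbasis of $T_A$ compatible with the standard flag. For each $i$, I look for an eigenvector $v_i$ of $T_A$ for the eigenvalue $\lambda_i$ whose support lies in the first $i$ coordinates and whose $i$-th coordinate equals $1$. The equations coming from rows below $i$ are satisfied trivially, while those coming from rows $k<i$ take the form
$$(\lambda_k-\lambda_i)\,v_{k,i}\;=\;-\sum_{j=k+1}^{i}(T_A)_{kj}\,v_{j,i},$$
which is solvable by back-substitution precisely because $\lambda_k\neq\lambda_i$. Assembling the columns, the matrix $Q=[v_1,\ldots,v_n]$ is upper triangular with $1$'s on the diagonal, hence belongs to $GL_n(L)$.

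Finally I check that $S:=PQ$ has the desired properties. By construction $Q^{-1}T_AQ$ is the diagonal matrix $\mathrm{diag}(\lambda_1,\ldots,\lambda_n)$, so $S^{-1}AS$ is diagonal. For $B$, note that $T_B$ is upper triangular and that $Q^{-1}$ is again upper triangular (inverses of invertible upper-triangular matrices are upper triangular), so the product $S^{-1}BS=Q^{-1}T_BQ$ is upper triangular. There is no genuine obstacle in this proof; the only place where care is needed is the back-substitution step, which is precisely the point at which the distinct-eigenvalue hypothesis is used.
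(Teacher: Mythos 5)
Your proof is correct and follows essentially the same route as the paper's: conjugate to upper triangular pairs $(T_A,T_B)$, then build an upper-triangular change-of-basis $Q$ from eigenvectors of $T_A$ supported on the standard flag, and set $S=PQ$. The only difference is that the paper just invokes an induction to produce $Q$, while you make the back-substitution explicit, which is a welcome clarification but not a different argument.
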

 \begin{proof}
  There exists $P\in{G}L_n(L)$ such that $P^{-1}AP=T,P^{-1}BP=U$ where $T$ and $U$ are upper triangular. Note that $\sigma(A)\subset L$. The principal minors of $T$ are diagonalizable over $L$. By induction, we can construct a $T$-eigenvectors basis of $L^n$ such that the associated change of basis matrix is a upper triangular matrix $Q\in GL_n(L)$. Let $S=PQ$. Then  $S^{-1}AS$ is diagonal and $S^{-1}BS=Q^{-1}UQ$ is upper triangular.  
  \end{proof}
  \begin{rem} \label{propor} We may replace each column of $S$ with a proportional column.
  \end{rem}
  The previous result leads to an algorithm to check whether two such matrices are ST or not. Its complexity is in $O(n^4)$.
    \begin{prop}
 Let $A,B\in\mathcal{M}_n(K)$. We assume that $A$ has $n$ distinct eigenvalues in $L$, an extension field of $K$, and that we know explicitly $\sigma{(}A)$. Then we can decide if whether or not $A$ and $B$ are ST over $L$. If $A$ and $B$ are ST over $L$, then we obtain explicitly a matrix $S\in{G}L_n(L)$ that diagonalizes $A$ and triangularizes $B$.  
 \end{prop}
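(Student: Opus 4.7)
The plan is to invoke Proposition \ref{diag} together with the distinct-eigenvalues hypothesis to reduce the ST test to a purely combinatorial question about a permutation of eigenvectors. Since the $n$ distinct eigenvalues $\alpha_1,\ldots,\alpha_n\in L$ of $A$ are known explicitly, I first compute, for each $i$, a nonzero vector $u_i\in\ker(A-\alpha_iI_n)$ by Gaussian elimination on a linear system over $L$; this costs $O(n^3)$ per eigenvalue, hence $O(n^4)$ in total. Setting $U=[u_1\mid\cdots\mid u_n]\in GL_n(L)$, one has $U^{-1}AU=\mathrm{diag}(\alpha_1,\ldots,\alpha_n)$, and I compute $B'=U^{-1}BU$ in $O(n^3)$ additional operations.

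Next, assume that $A$ and $B$ are ST over $L$. By Proposition \ref{diag}, there exists $S\in GL_n(L)$ such that $S^{-1}AS$ is diagonal and $S^{-1}BS$ is upper triangular. Since $A$ has distinct eigenvalues its eigenspaces are the lines $Lu_i$, so each column of $S$ must be proportional to some $u_i$; Remark \ref{propor} then permits me to absorb the scalars and assume $S=[u_{\pi(1)}\mid\cdots\mid u_{\pi(n)}]$ for some permutation $\pi\in\mathcal{S}_n$. A direct computation gives
\[
(S^{-1}BS)_{j,i}=B'_{\pi(j),\pi(i)},
\]
so $S^{-1}BS$ is upper triangular if and only if $B'_{\pi(j),\pi(i)}=0$ for all $j>i$.

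Rephrased, this asks that $\pi$ be a topological ordering of the directed graph $\Gamma$ on $\{1,\ldots,n\}$ whose arc set is $\{p\to q : p\neq q,\ B'_{p,q}\neq 0\}$. Therefore $A,B$ are ST over $L$ if and only if $\Gamma$ is acyclic: a test performed by a standard topological-sort algorithm in $O(n^2)$ time, which moreover outputs an admissible permutation $\pi$ when one exists. The matrix $S=[u_{\pi(1)}\mid\cdots\mid u_{\pi(n)}]$ then simultaneously diagonalizes $A$ and triangularizes $B$, proving the proposition. The total cost is dominated by the eigenvector computations, yielding the announced $O(n^4)$ complexity.

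The main subtle point is the reduction to a permutation matrix: I need to use the distinct-eigenvalue hypothesis to pin down each column of $S$ as a scalar multiple of some $u_i$, and then appeal to Remark \ref{propor} to dispose of the scalar factors. Once that reduction is justified, the translation into a topological-sort problem and the correctness of the algorithm are straightforward.
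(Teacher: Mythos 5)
Your proof is correct, and the initial reduction is the same as the paper's: both appeal to Proposition~\ref{diag} and Remark~\ref{propor} to show that, once one fixes a basis of $A$-eigenvectors $R$ and forms $Z=R^{-1}BR$, the matrices $A,B$ are ST over $L$ exactly when there is a permutation matrix $D$ with $D^{-1}ZD$ upper triangular. Where you diverge is in how to decide whether such a permutation exists. You reformulate the question as the acyclicity of the digraph on $\{1,\ldots,n\}$ with an arc $p\to q$ whenever the off-diagonal entry $Z_{p,q}$ is nonzero, and then run a topological sort; this is a standard algorithm with an immediate correctness proof, and it both decides the question and produces the permutation. The paper instead uses a bespoke greedy selection: it precomputes, for each column $i$, the quantity $\alpha_i=n-\sup\{j:z[j,i]\neq 0\}$ (the number of trailing zeros of the column), repeatedly picks the remaining column with largest $\alpha_i$, and aborts if at stage $r$ the selected $\alpha$ falls below $n-r$. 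The two procedures have the same $O(n^2)$ cost once $Z$ is in hand and both sit under the $O(n^4)$ eigenvector computation, so the complexity claim is the same. The advantage of your route is that the correctness of topological sort is textbook, whereas the paper's greedy step is stated without justification and is the delicate point of that argument (one has to check, for instance, that not updating the $\alpha_i$ after removing a column--row pair still yields a valid ordering whenever one exists); your version sidesteps this entirely.
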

 \begin{proof}
 Since $A$ has distinct eigenvalues in $L$, we can calculate, from $\sigma{(}A)$, a $A$-eigenvectors basis of $L^n$. Let $R$ be the associated matrix and $Z=R^{-1}BR=[z_{i,j}]$.
 \begin{itemize}
\item[Case 1.] The matrices $A,B$ are ST. According to the proof of Proposition \ref{diag} and Remark \ref{propor}, there exists a permutation matrix $D$ such if $S=RD$, then $S^{-1}BS=D^{-1}ZD$ is upper triangular.\\
We consider the following algorithm

\medskip

  \noindent $U:=\{1,\cdots,n\}$. \\
 For every $i\leq n$, \\
 \indent if the $i^{th}$ column of $Z$ is zero, then $\alpha_i:=n$ \\
 \indent else $\alpha_i:=n-\sup \{j\leq n\;|\;z[j,i]\not= 0\}$. \\
 For $r$ from $1$ to $n$, do\\
 \indent find $i_r$ such that $\alpha_{i_r}=\sup_U \alpha_i$.\\
 \indent if $\alpha_{i_r}< n-r$ then $BREAK$\\
  \indent $d_r:=i_r$\\
   \indent $U:=U\setminus\{i_r\}$.\\
   If $r=n$ then $OUTPUT:=(d_1,\cdots,d_n)$\\
   else $OUTPUT:=NULL$. 
 
 \medskip
 
 \noindent The output $(d_1,\cdots,d_n)$ constitutes a convenient permutation. 
\item[Case 2.] The matrices $A,B$ are not ST. The previous algorithm gives the output $NULL$.
\end{itemize}
 \end{proof}
 \begin{rem} When $A$ has multiple eigenvalues or $\sigma(A)$ is unknown, to find an efficient algorithm is hard. A finite rational algorithm which allows to check whether two given $n\times{n}$ complex matrices are ST is exposed in \cite[Theorem 6]{5}. The study of complexity of the presented algorithm is omitted in \cite{5} and, as the author shows in \cite{4}, this test is impractical for $n\geq{6}$.
\end{rem}
 
\section{Common invariant subspace and commutativity}

\begin{prop} \label{subs} Let $A\in\mathcal{M}_n(K)$ such that $A$ has $n$ distinct eigenvalues in an extension field $L$ of $K$ and let $Z=\{B\in\mathcal{M}_n(K)\;|\;A,B \text{ have a common eigenvector }$\\
$\text{in }L^n\}$. Then $Z$ is the union of $n$ subspaces of $\mathcal{M}_n(K)$, each of them containing the commutant of $A$.
\end{prop}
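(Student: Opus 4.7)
The plan is to exhibit explicitly the $n$ subspaces in question by using the eigenspace decomposition of $A$ over $L$. Since $A$ has $n$ distinct eigenvalues in $L$, it is diagonalizable over $L$, and its eigenspaces are one-dimensional. Fix an eigenvector basis $(u_1,\dots,u_n)$ of $L^n$ for $A$ and, for each $i\in\{1,\dots,n\}$, define
\[
Z_i=\{B\in\mathcal{M}_n(K)\mid Bu_i\in Lu_i\}.
\]
The proof then breaks into three short verifications: $Z=\bigcup_{i=1}^n Z_i$, each $Z_i$ is a $K$-linear subspace of $\mathcal{M}_n(K)$, and the commutant of $A$ lies in every $Z_i$.

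For the equality $Z=\bigcup Z_i$, the inclusion $\supseteq$ is immediate because $u_i$ is then a common eigenvector. Conversely, if $A$ and $B$ share an eigenvector $v\in L^n$, then $v$ belongs to some $A$-eigenspace; since the eigenvalues of $A$ are distinct, this eigenspace is exactly $Lu_i$ for some $i$, so $v=\lambda u_i$ and $Bu_i$ is a scalar multiple of $u_i$, giving $B\in Z_i$. This step is where the distinct-eigenvalue hypothesis is used in an essential way.

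To see that $Z_i$ is a $K$-subspace, observe that the evaluation map $B\mapsto Bu_i$ from $\mathcal{M}_n(K)$ to $L^n$ is $K$-linear, and that $Lu_i$ is a $K$-subspace of $L^n$ (even though it is an $L$-line). Hence $Z_i$ is the preimage of a $K$-subspace under a $K$-linear map. Finally, if $B\in\mathcal{M}_n(K)$ commutes with $A$, then $B$ preserves each $A$-eigenspace over $L$; since these eigenspaces are lines, $Bu_i\in Lu_i$ for every $i$, so the commutant of $A$ is contained in $\bigcap_i Z_i$.

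There is no real obstacle in this argument; the only point worth flagging is the compatibility between scalars: the eigenvectors $u_i$ live in $L^n$, but the matrices $B$ are constrained to have entries in $K$, and one must check that the defining condition of $Z_i$ still cuts out a $K$-subspace. The linearity argument above handles exactly this subtlety.
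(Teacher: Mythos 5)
Your proof is correct and takes essentially the same approach as the paper's: both decompose $Z$ as the union of the $n$ sets $Z_i=\{B : Bu_i\text{ proportional to }u_i\}$ indexed by the eigenvalues of $A$, and show each is a $K$-subspace of $\mathcal{M}_n(K)$ containing the commutant of $A$. The only (cosmetic) difference is in verifying the subspace property: the paper chooses $u_i$ with entries in $K[\alpha_i]$ and counts $k_{\alpha_i}(n-1)$ explicit $K$-linear conditions on the entries of $B$, whereas you realize $Z_i$ as the preimage of the $K$-subspace $Lu_i\subset L^n$ under the $K$-linear evaluation map $B\mapsto Bu_i$.
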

\begin{proof}
Let $\alpha\in\sigma{(}A)$, $L_{\alpha}=K[\alpha]$ and $[{{L}_{\alpha}}:K]=k_{\alpha}$. Let $u\in{{L}_{\alpha}}^n\setminus\{0\}$ be such that $Au=\alpha{u}$. If $B=[b_{i,j}]\in{Z}$, then the condition ``$Bu$ and $u$ are linearly dependent" can be written in the form of $n-1$ ${{L}_{\alpha}}$-linear conditions on the $(b_{i,j})_{i,j}$, that is $k_{\alpha}\times{(}n-1)$ $K$-linear conditions on the $(b_{i,j})_{i,j}$. Thus $B$ is in a $K$-vector space of dimension at least $n^2-k_{\alpha}(n-1)$ that contains the commutant of $A$. Finally $B$ goes through the union of $n$ such subspaces.
\end{proof}
\begin{rem} \label{invsp} One has several interesting properties when $\chi_A$ is irreducible over $K$\\
$i)$ The endomorphism $A$ has no invariant proper subspaces over $K$. \\
$ii)$ Since $K$ is a perfect field, $A$ has simple eigenvalues in $S_{\chi_A}$ and its commutant is $K[A]$ and has dimension $n$.\\
$iii)$ According to \cite[p. 51]{2} and $ii)$, any $A$-invariant subspace of dimension $k$ over $\overline{K}$ is spanned by $k$ $A$-eigenvectors.
\end{rem}
 From now on, we suppose that $A$ and $B$ have a proper common invariant subspace of dimension $k$ over an extension field of $K$.
\begin{thm}  \label{comeig}
 Let $n\geq{2}$. Let $A,B\in\mathcal{M}_n(K)$ be such that they have a common eigenvector over an extension field of $K$. We assume that the characteristic polynomial of $A$ is irreducible over $K$. Then $AB=BA$. 
 \end{thm}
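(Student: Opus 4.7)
The plan is to exploit transitivity of $G_A$ on $\sigma(A)$ to transport one common eigenvector into a full simultaneous eigenbasis.

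First, since $\chi_A$ is irreducible over the perfect field $K$, by Remark \ref{invsp} it has $n$ distinct roots $\alpha_1,\ldots,\alpha_n$ in the splitting field $S_{\chi_A}$, and $G_A$ acts transitively on $\{\alpha_1,\ldots,\alpha_n\}$. For each $i$, by row-reducing $A-\alpha_i I_n$ I can fix an eigenvector $u_i\in K[\alpha_i]^n\setminus\{0\}$ of $A$; since all eigenvalues are simple, each $\alpha_i$-eigenspace is one-dimensional over $\overline{K}$.

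Next I reduce the hypothesis to an assertion over $S_{\chi_A}$. The common eigenvector $v\in L^n$ of $A$ and $B$ satisfies $Av=\alpha v$ for some $\alpha\in L\cap\overline{K}$, so $\alpha\in\{\alpha_1,\ldots,\alpha_n\}$; renaming, say $\alpha=\alpha_1$. One-dimensionality of the $\alpha_1$-eigenspace forces $v=cu_1$, whence $Bu_1=\beta u_1$ for some $\beta\in L$. Reading this equation component-wise shows that $\beta$ equals a ratio of entries of $Bu_1$ and $u_1$, both lying in $K[\alpha_1]$; hence $\beta\in K[\alpha_1]\subset S_{\chi_A}$. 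So I now have an equation $Bu_1=\beta u_1$ entirely inside $S_{\chi_A}$.

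Then I apply Galois. For each $i$, pick $\tau_i\in G_A$ with $\tau_i(\alpha_1)=\alpha_i$ and apply $\tau_i$ entry-wise to $Au_1=\alpha_1u_1$ and $Bu_1=\beta u_1$. Since $A,B\in\mathcal{M}_n(K)$ are fixed by $\tau_i$, I obtain $A\tau_i(u_1)=\alpha_i\tau_i(u_1)$ and $B\tau_i(u_1)=\tau_i(\beta)\tau_i(u_1)$. Thus $\tau_i(u_1)$ is a nonzero common eigenvector of $A$ and $B$; being in the one-dimensional $\alpha_i$-eigenspace of $A$, it is proportional to $u_i$, so $u_i$ is an eigenvector of $B$ as well.

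Finally, as $i$ ranges from $1$ to $n$ the vectors $u_1,\ldots,u_n$ form a basis of $\overline{K}^n$ diagonalizing both $A$ and $B$, and therefore $AB=BA$.

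The main technical point, and the only step that is not purely formal, is step two: showing that the eigenvalue $\beta$ lands in $K[\alpha_1]$ rather than in the possibly larger field $L$. Once that is established, the Galois transport in step three is automatic, because $G_A=\mathrm{Gal}(S_{\chi_A}/K)$ acts on everything in sight and fixes $A$ and $B$.
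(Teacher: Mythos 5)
Your proof is correct and follows the same Galois-transport strategy as the paper: use transitivity of $G_A$ to carry the single common eigenvector $u$ into a full simultaneous eigenbasis $\{\tau_i(u)\}_i$ by applying Galois automorphisms componentwise to the equations $Au=\alpha u$ and $Bu=\beta u$. The one addition you make — explicitly verifying that the eigenvector and the eigenvalue $\beta$ can be taken inside $K[\alpha_1]\subset S_{\chi_A}$, so that the action of $G_A$ on them is legitimate even though the hypothesis only gives a common eigenvector over an arbitrary extension $L$ — is a point the paper's proof leaves implicit, and it is worth spelling out.
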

 \begin{proof}    \label{commut}
 Let $u$ be a common eigenvector and put $Au=\alpha u,Bu=\beta u$. Recall that $G_A$ is a transitive group, that is, there exist $(\tau_i)_{i=1,\cdots,n-1}\in G_A$ such that $$\sigma(A)=\{\alpha,\tau_1(\alpha),\cdots,\tau_{n-1}(\alpha)\}.$$
 Moreover, $\tau_i(u)$ is defined componentwise  and $A(\tau_i(u))=\tau_i(Au)=\tau_i(\alpha)\tau_i(u)$. Finally $\{u,\tau_1(u),\cdots,\tau_{n-1}(u)\}$ is an associated basis of eigenvectors of $A$.
 Thus $B(\tau_i(u))=\tau_i(Bu)=\tau_i(\beta)\tau_i(u)$ and $AB=BA$. 
 \end{proof}
   We can slightly improve the previous result as follows.
    \begin{lem}   \label{transp}
 If $A,B\in\mathcal{M}_n(L)$ have a common invariant subspace of dimension $k$ over $L$, then $A^T$ and $B^T$ have a common invariant subspace of dimension $n-k$ over $L$.  
  \end{lem}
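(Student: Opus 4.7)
The plan is to use a standard duality/annihilator argument with the symmetric bilinear form $\langle u,v\rangle = u^T v$ on $L^n$, which is non-degenerate.

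Let $W \subseteq L^n$ be a common invariant subspace of $A$ and $B$ of dimension $k$. Define the annihilator
\[
W^{\perp} = \{v \in L^n \;|\; v^T w = 0 \text{ for all } w \in W\}.
\]
Since $\langle\cdot,\cdot\rangle$ is non-degenerate, $\dim_L W^{\perp} = n - k$, so this is exactly the dimension we need.

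Next I would check invariance under $A^T$ and $B^T$. For any $v \in W^{\perp}$ and $w \in W$, one has
\[
(A^T v)^T w = v^T A w = 0
\]
because $Aw \in W$; hence $A^T v \in W^{\perp}$, and $W^{\perp}$ is $A^T$-invariant. The same computation with $B$ in place of $A$ shows $W^{\perp}$ is $B^T$-invariant. Therefore $W^{\perp}$ is a common $(A^T,B^T)$-invariant subspace of dimension $n-k$.

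There is essentially no obstacle: the argument is a one-line duality computation. The only point worth mentioning is that we are working over $L$ rather than $\mathbb{R}$ or $\mathbb{C}$, so one must use the bilinear form $u^T v$ (not a Hermitian form), which remains non-degenerate on $L^n$ for any field $L$; this is what makes the dimension count $\dim W + \dim W^{\perp} = n$ valid and the proof field-agnostic.
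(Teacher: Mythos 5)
Your proof is correct and is essentially the same as the paper's: the paper writes $W=\ker\Lambda$ for some full-rank $\Lambda\in\mathcal{M}_{n-k,n}(L)$ and shows that the column space of $\Lambda^T$ (which is exactly your annihilator $W^{\perp}$) is $A^T$- and $B^T$-invariant via the identity $\Lambda A=Z\Lambda$. Your computation $(A^Tv)^Tw=v^TAw=0$ is the same argument expressed in bilinear-form language rather than matrix language.
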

   \begin{proof}
  The common invariant subspace of dimension $k$, can be written $V=\{X\in{L}^n\;|\;\Lambda{X}=0\}$ where $\Lambda\in\mathcal{M}_{n-k,n}(L)$ has maximal rank $n-k$. Since $\ker(\Lambda)\subset\ker(\Lambda{A})$, there exists $Z\in\mathcal{M}_{n-k}(L)$ such that $\Lambda{A}=Z\Lambda$, that is $A^T\Lambda^T=\Lambda^TZ^T$. The $n-k$ columns of $\Lambda^T$ span a vector space of dimension $n-k$ that is invariant for $A^T$.  
  \end{proof}
  \begin{cor}     \label{hyper}
   Let $A,B\in\mathcal{M}_n(K)$ be such that they have a common invariant hyperplane over an extension field of $K$. We assume that the characteristic polynomial of $A$ is irreducible over $K$. Then $AB=BA$.   
  \end{cor}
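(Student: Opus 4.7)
The plan is to reduce the hyperplane case to the already-proved eigenvector case by passing to transposes.

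First I would apply Lemma \ref{transp} with $k=n-1$: viewing $A,B\in\mathcal{M}_n(K)\subset\mathcal{M}_n(L)$, where $L$ is the extension field over which the common invariant hyperplane lives, the lemma produces a common invariant subspace of dimension $n-(n-1)=1$ for $A^T$ and $B^T$ over $L$. That is exactly a common eigenvector of $A^T,B^T$ over an extension field of $K$.

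Next I would check that the hypotheses of Theorem \ref{comeig} are met by the pair $(A^T,B^T)$: both matrices lie in $\mathcal{M}_n(K)$ since transposition preserves the ground field, and $\chi_{A^T}=\chi_A$ is irreducible over $K$ by assumption. Theorem \ref{comeig} then yields $A^TB^T=B^TA^T$, and transposing this identity gives $BA=AB$.

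The argument is essentially routine once Lemma \ref{transp} is in hand; the only thing to watch is that the lemma is stated over $L$ whereas the corollary hypothesizes $A,B\in\mathcal{M}_n(K)$, so the step of embedding into $\mathcal{M}_n(L)$ before applying the lemma, and then observing that the transposed pair still sits in $\mathcal{M}_n(K)$ so that Theorem \ref{comeig} is applicable, should be made explicit. There is no genuine obstacle.
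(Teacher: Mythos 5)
Your proposal is correct and is essentially identical to the paper's proof: both apply Lemma \ref{transp} to obtain a common eigenvector for $A^T,B^T$, invoke Theorem \ref{comeig} using $\chi_{A^T}=\chi_A$, and transpose the resulting commutation relation. The extra care you take about the interplay of $K$ and $L$ is sound but does not change the argument.
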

  \begin{proof}
 According to Lemma \ref{transp}, $A^T$ and $B^T$ have a common eigenvector and by Theorem \ref{comeig}, $A^TB^T=B^TA^T$, that implies $AB=BA$.   
  \end{proof}
  Now we consider the case where $A$ and $B$ have a common invariant proper subspace of dimension $\geq{2}$. Recall that $\mathcal{A}_n$, the group of even permutations of $\{1,\cdots,n\}$, contains the cycles of odd length. 
  \begin{thm}   \label{invsub}
Let $n\geq 3$ and $A,B\in\mathcal{M}_n(K)$ be such that they have a common invariant proper vector subspace over an extension field of $K$. We assume that $\chi_A$ is irreducible over $K$ and $G_A=\mathcal{S}_n$ or $G_A=\mathcal{A}_n$. Then $AB=BA$.   
  \end{thm}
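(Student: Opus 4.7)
The plan is to use the Galois action to reduce to Theorem \ref{comeig}, specifically by showing that the $A$-eigenvector $u_1$ spanning one direction of the common invariant subspace is itself a common eigenvector of $A$ and $B$. I first dispose of the boundary cases: $k=1$ is Theorem \ref{comeig} and $k=n-1$ is Corollary \ref{hyper}, so I may assume $2 \leq k \leq n-2$, which forces $n \geq 4$. Let $W$ be the common invariant subspace. Since $\chi_A$ is irreducible over the perfect field $K$, $A$ has $n$ distinct eigenvalues $\alpha_1,\ldots,\alpha_n$; by Remark \ref{invsp}(iii), $W$ has a basis $u_1,\ldots,u_k$ of $A$-eigenvectors with eigenvalues $\alpha_1,\ldots,\alpha_k$ after relabelling. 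I choose $u_1$ with coordinates in $K[\alpha_1]$ (possible since $\ker(A-\alpha_1 I_n)$ is a line defined over $K[\alpha_1]$) and each $u_i$ with coordinates in $K[\alpha_i]$ for $i \geq 2$. Working over $S_{\chi_A}$, the $B$-invariance of $W$ gives $Bu_1 = \sum_{i=1}^k \lambda_i u_i$ with $\lambda_i \in S_{\chi_A}$; the task reduces to showing $\lambda_j = 0$ for every $j \geq 2$, since then $u_1$ is a common eigenvector of $A$ and $B$ and Theorem \ref{comeig} yields $AB=BA$.

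Fix $j \in \{2,\ldots,k\}$. The key step is to produce $\tau_j \in G_A$ that fixes $\alpha_1$ and sends $\alpha_j$ to some $\alpha_{m_j}$ with $m_j \in \{k+1,\ldots,n\}$. By the Galois correspondence applied to $K \subset K[\alpha_1] \subset S_{\chi_A}$, such a $\tau_j$ fixes $K[\alpha_1]$ pointwise, hence fixes the vector $u_1$, and therefore $\tau_j(Bu_1)=Bu_1$ since $B \in \mathcal{M}_n(K)$. Applying $\tau_j$ componentwise to the expansion of $Bu_1$ thus yields $Bu_1 = \sum_{i=1}^k \tau_j(\lambda_i)\,\tau_j(u_i)$. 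Each $\tau_j(u_i)$ is an $A$-eigenvector for the eigenvalue $\tau_j(\alpha_i)$; in particular $\tau_j(u_j)$ is a nonzero scalar multiple of $u_{m_j}$, whose eigenvalue lies outside $\{\alpha_1,\ldots,\alpha_k\}$, and since $\tau_j$ acts injectively on $\sigma(A)$, no other $\tau_j(u_i)$ contributes to the $\alpha_{m_j}$-eigenspace. Comparing the two expressions for $Bu_1 \in W$ in the eigenbasis of $A$ forces $\tau_j(\lambda_j)=0$, hence $\lambda_j=0$.

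The main obstacle is the existence of $\tau_j$, and this is exactly where the hypothesis on $G_A$ enters. When $G_A=\mathcal{S}_n$ I would take the transposition exchanging $\alpha_j$ and $\alpha_{m_j}$ for any $m_j \in \{k+1,\ldots,n\}$; this fixes $\alpha_1$ since $j \geq 2$ and $m_j \geq 3$. When $G_A=\mathcal{A}_n$ an even permutation is required, so I would use the $3$-cycle $(\alpha_j\;\alpha_m\;\alpha_{m'})$ with two distinct indices $m,m' \in \{k+1,\ldots,n\}$; this is possible precisely because $n-k \geq 2$, which is why the reduction to $k \leq n-2$ was made. In both cases $G_A$ is large enough that the stabilizer of $\alpha_1$ acts transitively on $\sigma(A)\setminus\{\alpha_1\}$, so the required $\tau_j$ certainly exists.
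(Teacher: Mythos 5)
Your proof is correct, and it takes a genuinely different route from the paper's. Both arguments aim to show that $Bu_1$ is a scalar multiple of $u_1$ and then invoke Theorem~\ref{comeig}, but the Galois elements used are different. The paper first reduces to $k\le n/2$ via Lemma~\ref{transp}, then picks a \emph{single} cycle $\tau=(\alpha_1\,\alpha_2\cdots\alpha_r)$ of odd length $r\in\{k+1,k+2\}$ lying in $\mathcal{A}_n$, normalizes so that $u_{i+1}=\tau^i(u_1)$, writes $Bu_1=\sum_{i=0}^q\lambda_i\tau^i(u_1)$ with $\lambda_q\ne 0$, and applies $\tau^{k-q}$: the shifted expansion of $B\tau^{k-q}(u_1)\in F$ then acquires a nonzero coefficient on $\tau^k(u_1)\notin F$, a contradiction. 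You instead dispose of $k\in\{1,n-1\}$ by Theorem~\ref{comeig} and Corollary~\ref{hyper}, assume $2\le k\le n-2$, and for \emph{each} $j\ge 2$ choose a fresh element $\tau_j$ in the stabilizer of $\alpha_1$ sending $\alpha_j$ outside $\{\alpha_1,\dots,\alpha_k\}$; since $\tau_j$ fixes $u_1$ (chosen in $K[\alpha_1]^n$) it fixes $Bu_1$, and comparing the $\tau_j(\alpha_j)$-component of $Bu_1\in W$ kills $\lambda_j$ directly. Your version avoids the parity split on $k$ and the cyclic-shift bookkeeping, at the modest cost of choosing a separate $\tau_j$ per index; it makes the underlying mechanism explicit as two-transitivity of $\mathcal{S}_n$ and $\mathcal{A}_n$ (transitivity of a point stabilizer on the remaining roots), while the paper's single-cycle trick is marginally more economical but a bit more opaque. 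Both correctly locate where the hypothesis $n-k\ge 2$ is needed (you, to build a $3$-cycle in $\mathcal{A}_n$ avoiding $\alpha_1$; the paper, to fit a cycle of length $k+2$ inside $\{1,\dots,n\}$). One stylistic note: your choice of $u_i\in K[\alpha_i]^n$ for $i\ge 2$ is never used; only the normalization $u_1\in K[\alpha_1]^n$ matters, and even that can be dropped at the cost of tracking a scalar, since $\tau_j(u_1)$ is automatically proportional to $u_1$.
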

  \begin{proof} Since $\chi_A=\chi_{A^T}$ and according to Lemma \ref{transp}, we may change $k$ with $n-k$ and assume that $k\leq \dfrac{n}{2}$, that implies $k+2\leq n$.
Let $F$ be a common invariant subspace of dimension $k\geq 2$ for $A,B$. According to Remark \ref{invsp}. $iii)$, the subspace $F$ is generated by certain eigenvectors $u_1,\cdots,u_k$ of $A$ respectively associated to the pairwise distinct eigenvalues of $A$: $\alpha_1,\cdots,\alpha_k$. Let $\sigma(A)=\{\alpha_1,\cdots,\alpha_k,\cdots,\alpha_n\}$. 
There exists $\tau\in \mathcal{A}_n\subset G_A$, a cycle of length $r=k+1$ if $k$ is even (resp. $r=k+2$ if $k$ is odd) such that, for every $1\leq i\leq r-1$, $\alpha_{i+1}=\tau(\alpha_i)$. Note that $F=[u_1,\cdots,\tau^{k-1}(u_1)]$ and $\tau^k(u_1)\notin F$.
Assume that $Bu_1=\sum_{i=0}^q\lambda_i\tau^i(u_1)$ where $q\in\llbracket 1,k-1\rrbracket$, for every $i$, $\lambda_i\in S_{\chi_{A}}$ and $\lambda_q\not= 0$. Therefore 
$$Bu_{k-q+1}=B(\tau^{k-q}(u_1))=\sum_{i=0}^{q-1}\tau^{k-q}(\lambda_i)\tau^{k-q+i}(u_1)+\lambda_q\tau^k(u_1)\in F.$$
 Then $\lambda_q=0$, that is a contradiction. Finally $Bu_1=\lambda_0u_1$ and we conclude by Theorem \ref{comeig}.
\end{proof}
    We can wonder if we still get the same conclusion of Theorem \ref{invsub} when droping the hypothesis $G_A=\mathcal{S}_n$ or $G_A=\mathcal{A}_n$.
  The answer is no in general but is yes if $n$ is a prime. 
     \begin{thm}
 Assume that $n\geq{3}$ is a prime number and let $A,B\in\mathcal{M}_n(K)$ be such that $\chi_A$ is irreducible over $K$. If $A$ and $B$ have a proper common invariant subspace, then $AB=BA$.  
  \end {thm}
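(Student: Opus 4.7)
The plan is to mimic Theorem~\ref{invsub}, replacing the short cycle of $\mathcal{A}_n$ by an $n$-cycle whose existence is forced by primality. Since $\chi_A$ is irreducible, $G_A$ is a transitive subgroup of $\mathcal{S}_n$, so $n$ divides $|G_A|$; as $n$ is prime, Cauchy's theorem yields $\tau\in G_A$ of order $n$, and any element of prime order $n$ in $\mathcal{S}_n$ is an $n$-cycle. Label the roots of $\chi_A$ so that $\tau(\alpha_i)=\alpha_{i+1}$, with indices read in $\mathbb{Z}/n\mathbb{Z}$. By Remark~\ref{invsp}$(iii)$, $F$ is spanned by $k$ eigenvectors of $A$; after relabeling, call one of them $u_1$, associated to $\alpha_1$, and put $u_\ell:=\tau^{\ell-1}(u_1)$. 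Each $u_\ell$ is an $\alpha_\ell$-eigenvector, and since $A$ has simple spectrum we may write $F=\mathrm{span}(u_j:j\in J)$ for some $J\subset\mathbb{Z}/n\mathbb{Z}$ of cardinality $k$ with $1\in J$.

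I would then write $Bu_1=\sum_{j\in J}\mu_j u_j$ with $\mu_j\in S_{\chi_A}$, and apply the automorphism $\tau^m$ componentwise. Since $B$ has entries in $K$ it commutes with $\tau^m$, yielding
$$B\,u_{1+m}=\tau^m(Bu_1)=\sum_{j\in J}\tau^m(\mu_j)\,u_{j+m},$$
where indices are read modulo $n$. Whenever $1+m\in J$ the left-hand side lies in $F$, which forces $j+m\in J$ for every $j\in J$ with $\mu_j\neq 0$. Letting $m$ range over the set $J-1$, this says precisely $J+(j-1)\subseteq J$, hence by cardinality $J+(j-1)=J$ for each such $j$.

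To conclude, the set $H:=\{d\in\mathbb{Z}/n\mathbb{Z}:J+d=J\}$ is a subgroup of a cyclic group of prime order, so $H=\{0\}$ or $H=\mathbb{Z}/n\mathbb{Z}$; the inequality $1\le|J|\le n-1$ excludes the second possibility, forcing $H=\{0\}$. Thus $j=1$ whenever $\mu_j\neq 0$, i.e.\ $Bu_1=\mu_1 u_1$, and Theorem~\ref{comeig} delivers $AB=BA$. The main obstacle is producing a single $\tau\in G_A$ acting as an $n$-cycle on $\sigma(A)$; once this is secured via Cauchy's theorem, the remaining step is an elementary observation about subgroups of a cyclic group of prime order.
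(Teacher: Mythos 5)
Your proof is correct and follows essentially the same route as the paper: Cauchy's theorem produces an $n$-cycle $\tau\in G_A$, and one then shows $Bu_1$ is collinear to $u_1$ by pushing the expansion of $Bu_1$ around the $\tau$-orbit and using $B$-invariance of $F$. The only difference is cosmetic: you phrase the final combinatorial step via the translation stabilizer $H\leq\mathbb{Z}/n\mathbb{Z}$ of the index set $J$, while the paper states the same fact more tersely by directly producing $q$ with $\tau^q(u)\in F$ and $\tau^{q+p}(u)\notin F$.
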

  \begin{proof}  Let $F$ be a common invariant subspace of dimension $k\in\llbracket{2},n-1\rrbracket$ for $A,B$. Let $u\in F$ be an eigenvector of $A$ associated to $\alpha\in\sigma(A)$. Note that $n$ divides the cardinality of $G_A$. Since $n$ is prime and according to Cauchy's theorem, there exist $\tau\in G_A$ of order $n$. Necessarily the permutation $\tau$ is a cycle of length $n$ and $\sigma(A)=\{\alpha,\cdots,\tau^{n-1}(\alpha)\}$. 
 Moreover $\{u,\cdots,\tau^{n-1}(u)\}$ is a basis of eigenvectors of $A$ and some among these vectors constitute a basis of $F$. Put $Bu=\lambda_0u+\sum_{0<i\leq n-1}\lambda_i\tau^i(u)$ where the $(\lambda_i)_i$ are in $\overline{K}$. 
 Assume that there exists $p\in \llbracket{1},n-1\rrbracket$ such that $\lambda_p\not=0$. Since $n$ is prime and $k<n$, there exists an integer $q$ such that $\tau^q(u)\in F$ and $\tau^{q+p}(u)\notin F$. Therefore
 $$B(\tau^q(u))=\tau^q(\lambda_0)\tau^{q}(u)+\sum_{0<i<n,i\not= p}\tau^q(\lambda_i)\tau^{q+i}(u)+\tau^q(\lambda_p)\tau^{q+p}(u)\in F.$$
 Thus $B(\tau^q(u))$ is written as a linear combination of the basis $\{u,\cdots,\tau^{n-1}(u)\}$ and the coefficients of the vectors that are not in $F$ are zero.
 Consequently $\lambda_p=0$, that is a contradiction. Finally $Bu=\lambda_0 u$ and we conclude by Theorem \ref{comeig}. 
  \end{proof}
  \begin{rem}
  Consider $A,B\in\mathcal{M}_{35}(\mathbb{Q})$ such that $AB\not= BA$ (the verification is easy) and $G_A=\mathcal{S}_{35}$ or $\mathcal{A}_{35}$ (the verification is easy with the ``Magma'' software). Then, by Theorem \ref{invsub}, we deduce that $A,B$ admit no common invariant proper subspaces (the direct verification is impossible because the algorithm associated to Theorem \ref{tsatso} is impractical for $n>12$).
  \end{rem}
  \section{The case $n=4$}
  Assume that $A,B\in\mathcal{M}_4(K)$ have a common invariant subspace of dimension $k\in\{1,2,3\}$ and that $\chi_A$ is irreducible over $K$. If $k=1,3$, then from Theorem \ref{comeig} and Corollary \ref{hyper}, $AB=BA$. From Theorem \ref{invsub}, we obtain the same conclusion if $k=2$ and $G_A=\mathcal{S}_4$ or $\mathcal{A}_4$. It remains to study the cases where $A$ admits an invariant plane $\Pi$ and $G_A=\mathcal{C}_4$, the cyclic group with four elements, ${\mathcal{C}_2}^2$ or $\mathcal{D}_4$, the dihedral group with eight elements. Of course, if $K$ is a finite field, then necessarily $G_A=\mathcal{C}_4$.
   
   \medskip
   
   Let $A\in\mathcal{M}_n(K)$ be such that $\chi_A$ is irreducible over $K$ and $\Pi$ be a $A$-invariant plane. We denote by $r_A(\Pi)$ the dimension of the $K$-vector space of the matrices $B\in\mathcal{M}_n(K)$ such that $\Pi$ is a $B$-invariant plane.
  We will see that $r_A(\Pi)$ does not depend only on $k$ and $G_A$.
  
  \medskip
  \begin{prop} \label{diedral}
  Let $A\in\mathcal{M}_4(K)$ such that $\chi_A$ is irreducible, $G_A=\mathcal{D}_4$ and $\Pi$ is a $A$-invariant plane. Then $r_A(\Pi)=4$ or $8$.
  \end{prop}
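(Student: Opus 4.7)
The plan is to exploit the structural description of $A$-invariant planes from Remark \ref{invsp}, combine it with the action of $G_A=\mathcal{D}_4$ on such planes, and compute $r_A(\Pi)$ by a Galois-descent argument in each case.

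Write $\sigma(A)=\{\alpha_1,\dots,\alpha_4\}$ with associated eigenvectors $u_1,\dots,u_4\in\overline{K}^4$; by Remark \ref{invsp}.$iii)$, $\Pi=\Pi_{ij}:=[u_i,u_j]$ for some pair $\{i,j\}\subset\{1,2,3,4\}$, and its Galois conjugates are the planes $\Pi_{\tau(i)\tau(j)}$, $\tau\in G_A$. Fixing a standard embedding $\mathcal{D}_4=\langle(1234),(13)\rangle\subset\mathcal{S}_4$, a short computation shows that $G_A$ has exactly two orbits on $2$-element subsets of $\{1,\dots,4\}$: an ``adjacent'' orbit of size $4$, namely $\{1,2\},\{2,3\},\{3,4\},\{1,4\}$, and a ``diagonal'' orbit of size $2$, namely $\{1,3\},\{2,4\}$. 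Any $B\in\mathcal{M}_4(K)$ with $B\Pi\subset\Pi$ must automatically preserve every Galois conjugate $\tau(\Pi)$ since $B$ is $K$-rational and hence $\tau(B)=B$; this reduces the problem to describing the space of matrices stabilizing the whole orbit of $\Pi$.

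In the adjacent case, the four conjugate planes intersect pairwise in the four lines $[u_i]$, so $B$ stabilizes each $[u_i]$ and must therefore be diagonal in the basis $(u_i)$. The $\overline{K}$-space of such matrices is exactly the commutant of $A$ in $\mathcal{M}_4(\overline{K})$, which is Galois-stable and descends to $K[A]$; Remark \ref{invsp}.$ii)$ then yields $r_A(\Pi)=\dim_K K[A]=4$. In the diagonal case, $\overline{K}^4=\Pi_{13}\oplus\Pi_{24}$, and the $\overline{K}$-space $W$ of matrices stabilizing both $\Pi_{13}$ and $\Pi_{24}$ consists of those matrices block-diagonal in the adapted basis $(u_1,u_3,u_2,u_4)$, hence has $\overline{K}$-dimension $8$. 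Since $\{\Pi_{13},\Pi_{24}\}$ is $G_A$-stable, so is $W$, and Galois descent then forces $\dim_K(W\cap\mathcal{M}_4(K))=8$, giving $r_A(\Pi)=8$.

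The main delicate step will be executing the Galois-descent argument in the diagonal case, that is, verifying that the $G_A$-stable $8$-dimensional $\overline{K}$-subspace $W$ really descends to a $K$-subspace of the same dimension over $K$, and does not merely yield an upper bound; this holds by the general descent principle for Galois-stable subspaces of $\mathcal{M}_n(\overline{K})$, but should be argued carefully. The orbit computation is routine, and the adjacent case is essentially forced by the inclusion $K[A]\subset\{B\in\mathcal{M}_4(K):B\Pi\subset\Pi\}$ together with Remark \ref{invsp}.$ii)$.
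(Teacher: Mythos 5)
Your proof is correct, and it takes a genuinely different route from the paper's. The paper fixes an eigenvector $u\in\Pi$, writes $Bu=\lambda u+\mu\tau(u)$ with $\tau\in H=\{\tau\in G_A:\tau(\alpha_1)=\alpha_2\}$, and then argues directly with the action of $\mathcal{D}_4$: when some $\tau\in H$ has order $4$ it forces $\mu=0$ (so $B$ commutes with $A$ and $r_A(\Pi)=4$), and when both elements of $H$ have order $2$ it parameterizes $B$ by $(\lambda,\mu)\in L^2$, getting $r_A(\Pi)=2[L:K]$ and then pinning this down to $8$ by a degree count combined with the a priori bound $r_A(\Pi)<16$. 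You instead classify the $G_A$-orbit of the pair $\{i,j\}$ indexing $\Pi=\Pi_{ij}$ (the ``edge'' orbit of size $4$ versus the ``diagonal'' orbit of size $2$), use the observation that a $K$-rational $B$ preserving $\Pi$ must preserve the entire Galois orbit of $\Pi$, and then read off $r_A(\Pi)$ from the $\overline K$-dimension of the stabilizing space via Galois descent. Your dichotomy corresponds exactly to the paper's Case~1/Case~2 (an element of $H$ has order $4$ precisely in the edge case), so the answers agree.

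What your approach buys: a cleaner conceptual explanation of where the two cases come from, and a dimension count that avoids the paper's slightly roundabout argument in Case~2 (where the identity $r_A(\Pi)=2[L:K]$ is asserted without spelling out surjectivity of $B\mapsto(\lambda,\mu)$, and the final value is extracted from $r_A(\Pi)<16$ rather than computed directly). What the paper's approach buys: it is entirely elementary and avoids invoking the descent principle by name. As for the step you flag as delicate: it is indeed the standard Galois descent statement, and it applies here because the space $W$ of matrices stabilizing both planes is defined over the splitting field $S_{\chi_A}$ (the normalized eigenvectors $u_i$ have entries there) and is stable under $\mathrm{Gal}(S_{\chi_A}/K)=G_A$; descent then gives $\dim_K\bigl(W\cap\mathcal{M}_4(K)\bigr)=\dim_{S_{\chi_A}}\bigl(W\cap\mathcal{M}_4(S_{\chi_A})\bigr)=8$. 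Your adjacent-case argument is likewise complete, since preserving all four conjugate planes forces $B$ to preserve the four lines $[u_i]$ arising as pairwise intersections, and the commutant of $A$ in $\mathcal{M}_4(K)$ is $K[A]$ of dimension $4$ by Remark~\ref{invsp}.$ii)$.
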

  \begin{proof}
  There exist $\alpha_1,\alpha_2\in\sigma(A)$ such that $\Pi=\ker((A-\alpha_1 I_4)(A-\alpha_2 I_4))$.  Let $L=K(\alpha_1,\alpha_2)$. Note that $H=\{\tau\in G_A\;|\;\tau(\alpha_1)=\alpha_2\}$ has two elements. Let $u$ be an eigenvector of $A$ associated to $\alpha_1$. If $\tau\in H$, then $\{u,\tau(u)\}$ is a basis of $\Pi$.  
  Let $B\in\mathcal{M}_4(K)$ such that $\Pi$ is $B$-invariant. Therefore $Bu=\lambda u+\mu \tau(u)$ where $\lambda,\mu\in L$.
  \begin{itemize}
  \item Case 1. One element $\tau$ of $H$ has order $4$. Then $\sigma(A)=(\tau^i(\alpha_1))_{0\leq i\leq 3}$ and $(\tau^i(u))_{0\leq i\leq 3}$ is a basis of $K^n$ constituted by eigenvectors of $A$. Thus 
  $$B(\tau(u))=\tau(\lambda)\tau(u)+\tau(\mu)\tau^2(u)\in\Pi,$$
   that implies $\mu=0$. Therefore, for every $i$, $B(\tau^i(u))=\tau^i(\lambda)\tau^i(u)$ and $AB=BA$.
  \item Case 2. The elements of $H$ have order $2$. Then 
  $$B(\tau(u))=\tau(\lambda)\tau(u)+\tau(\mu)u\in\Pi.$$
   Let $(\tau_i)_{i=3,4}\in G_A$ such that $\tau_i(\alpha_1)=\alpha_i$. Clearly, $\{u,\tau(u),\tau_3(u),\tau_4(u)\}$ is a basis of eigenvectors of $A$ and, for $i=3,4$, $B(\tau_i(u))=\tau_i(Bu)$ depends only on $Bu$. Finally $B$ depends only on $\lambda,\mu\in L$ and $r_A(\Pi)=2[L:K]$. Necessarily $r_A(\Pi)<16$ and $[L:K]=4$ or $8$. Therefore $r_A(\Pi)=8$.  
  \end{itemize}
  \end{proof}
  \begin{prop}  \label{C4}
  Let $A\in\mathcal{M}_4(K)$ such that $\chi_A$ is irreducible, $G_A=\mathcal{C}_4$ and $\Pi$ is a $A$-invariant plane. Then $r_A(\Pi)=4$ or $8$.
  \end{prop}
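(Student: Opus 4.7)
The plan is to parameterise the matrices $B$ that preserve $\Pi$ by their action on a single $A$-eigenvector, in the spirit of the proof of Proposition \ref{diedral}, using Galois equivariance to extend $B$ to the whole space. I would fix a generator $\tau$ of $G_A=\mathcal{C}_4\subset\mathcal{S}_4$, necessarily a $4$-cycle on $\sigma(A)$, and an eigenvector $u$ of $A$ for some root $\alpha$ of $\chi_A$, so that $(u,\tau(u),\tau^2(u),\tau^3(u))$ is an associated eigenbasis. By Remark \ref{invsp}.iii, $\Pi$ is spanned by two of these eigenvectors, so after relabelling one may assume $\Pi=[u,\tau^j(u)]$ with $j\in\{1,2,3\}$. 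The cases $j=1$ and $j=3$ coincide: since $B\in\mathcal{M}_n(K)$, $B\Pi\subset\Pi$ is equivalent to $B\tau(\Pi)\subset\tau(\Pi)$, and $\tau([u,\tau^3(u)])=[u,\tau(u)]$; so it suffices to treat $j=1$ and $j=2$.

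For $j=2$, with $\Pi=[u,\tau^2(u)]$, I would write $Bu=\lambda u+\mu\tau^2(u)$ with $\lambda,\mu\in S_{\chi_A}$. Applying $\tau^2$ automatically gives $B\tau^2(u)=\tau^2(\lambda)\tau^2(u)+\tau^2(\mu)u\in\Pi$, while applying $\tau$ or $\tau^3$ only determines $B\tau(u)$ and $B\tau^3(u)$, without requiring them to lie in $\Pi$. Thus no condition is placed on $(\lambda,\mu)$, and the assignment $(\lambda,\mu)\mapsto B$ is a $K$-linear isomorphism onto the space of preserving matrices. Since $[S_{\chi_A}:K]=|G_A|=4$, this yields $r_A(\Pi)=2\cdot 4=8$.

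For $j=1$, with $\Pi=[u,\tau(u)]$, writing $Bu=\lambda u+\mu\tau(u)$ and applying $\tau$ gives $B\tau(u)=\tau(\lambda)\tau(u)+\tau(\mu)\tau^2(u)$, which lies in $\Pi$ only if $\tau(\mu)=0$, i.e.\ $\mu=0$. Then $Bu=\lambda u$, and by $G_A$-equivariance $B\tau^i(u)=\tau^i(\lambda)\tau^i(u)$, so $B$ is diagonal in the $A$-eigenbasis. This forces $B\in K[A]$, with the parameter $\lambda=p(\alpha)\in S_{\chi_A}$ recovering $B=p(A)$ bijectively; hence $r_A(\Pi)=\dim_K K[A]=4$.

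The main obstacle will be justifying Galois descent in case $j=2$, namely that every pair $(\lambda,\mu)\in S_{\chi_A}^{\,2}$ genuinely produces a matrix with entries in $K$, not merely in $S_{\chi_A}$. Extending $B$ by $G_A$-equivariance gives a linear endomorphism of $S_{\chi_A}^{\,4}$ commuting with the componentwise Galois action, whose matrix in the standard basis therefore has entries in the fixed field $S_{\chi_A}^{G_A}=K$; once this point is in hand, the dimension count is routine.
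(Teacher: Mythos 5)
Your proof is correct and follows essentially the same route as the paper. The paper's own argument, for $G_A=\mathcal{C}_4$, considers the unique $\tau\in G_A$ sending $\alpha_1$ to $\alpha_2$ (where $\Pi$ corresponds to the eigenvalue pair $\{\alpha_1,\alpha_2\}$) and splits according to whether $\tau$ generates $\mathcal{C}_4$ (forcing $\mu=0$, so $B$ commutes with $A$ and $r_A(\Pi)=4$) or has order $2$ (no constraint, so $r_A(\Pi)=2[L:K]=8$). Your parameterisation by a fixed generator and the exponent $j\in\{1,2,3\}$, with the reduction of $j=3$ to $j=1$, is the same dichotomy in slightly different clothing: $j\in\{1,3\}$ is exactly the case where the element of $H$ is a generator, and $j=2$ is the case where it has order $2$. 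The one genuine addition you make is spelling out the Galois descent: the paper asserts $r_A(\Pi)=2[L:K]$ without noting that one must check every $(\lambda,\mu)\in S_{\chi_A}^2$ really gives a matrix $B$ with entries in $K$, whereas you flag this and sketch the correct justification (the $G_A$-equivariant extension commutes with the componentwise Galois action, hence its matrix in the standard basis of $K^4$ is fixed entry-wise by $G_A$ and so lies in $\mathcal{M}_4(K)$). That point is worth keeping explicit.
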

  \begin{proof}
 We use the notations of Proposition \ref{diedral}. Here $[L:K]=4$ and $H$ has a unique element $\tau$.\\
 \begin{itemize}
 \item Case 1. $\tau$ is a generator of $G_A$. As in the proof of Proposition \ref{diedral}, Case 1, we show that $AB=BA$.
 \item Case 2. $\tau$ has order $2$. As in the proof of Proposition \ref{diedral}, Case 2, we show that $r_A(\Pi)=2[L:K]=8$.  
 \end{itemize}
  \end{proof}
 \begin{prop}
  Let $A\in\mathcal{M}_4(K)$ such that $\chi_A$ is irreducible, $G_A={\mathcal{C}_2}^2$ and $\Pi$ is a $A$-invariant plane. Then $r_A(\Pi)=8$.
  \end{prop}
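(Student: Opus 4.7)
The plan is to mimic Case 2 of the previous two propositions, and then to compute the extension degree $[L:K]$ explicitly from the Galois-theoretic data.

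First, with the notations of Proposition \ref{diedral}, I would record that the transitive action of $G_A = \mathcal{C}_2^2$ on the four roots $\sigma(A)$ is regular: by the orbit--stabilizer theorem, the stabilizer of any $\alpha_i$ has order $|G_A|/|\sigma(A)| = 1$, so the action is free and $H = \{\tau \in G_A \mid \tau(\alpha_1) = \alpha_2\}$ is a coset of a trivial stabilizer, hence a singleton $\{\tau\}$. Since every non-identity element of $\mathcal{C}_2^2$ has order $2$, $\tau$ is of order $2$. This places us in the exact situation of Case 2 of Propositions \ref{diedral} and \ref{C4}.

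Next, I would repeat the argument of that Case 2 verbatim. Pick $\tau_3,\tau_4\in G_A$ with $\tau_i(\alpha_1)=\alpha_i$; the four elements $\{u,\tau(u),\tau_3(u),\tau_4(u)\}$ then form an eigenbasis of $A$. Given $B \in \mathcal{M}_4(K)$ with $\Pi$ $B$-invariant, write $Bu = \lambda u + \mu \tau(u)$ with $\lambda,\mu \in L$. Because $\tau^2 = \mathrm{id}$, the vector $B(\tau(u)) = \tau(\lambda)\tau(u)+\tau(\mu)u$ lies automatically in $\Pi$, so no constraint on $(\lambda,\mu)$ is imposed. Since $B$ has entries in $K$, it commutes with the Galois action, so $B(\tau_i(u)) = \tau_i(Bu)$ is determined by $(\lambda,\mu)$ for $i=3,4$. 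Conversely, the recipe $B(\tau_i(u)) := \tau_i(\lambda u + \mu\tau(u))$ defines a $\overline K$-linear endomorphism that is $G_A$-equivariant, hence lies in $\mathcal{M}_4(K)$, and it is visibly $\Pi$-invariant. This establishes a $K$-linear bijection between the space in question and $L^2$, so $r_A(\Pi) = 2[L:K]$.

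Finally I would compute $[L:K]$. Since $\chi_A$ is irreducible of degree $4$, $[K(\alpha_1):K]=4$; and since $|G_A|=4$, we have $[S_{\chi_A}:K]=4$. From $K(\alpha_1) \subseteq L \subseteq S_{\chi_A}$ we deduce $L = K(\alpha_1) = S_{\chi_A}$ and $[L:K]=4$. Therefore $r_A(\Pi) = 2 \cdot 4 = 8$. I do not anticipate any real obstacle: the only point worth double-checking is that the recipe producing $B$ from $(\lambda,\mu)$ really lands in $\mathcal{M}_4(K)$, which follows from the routine verification that $\sigma\tau_i$ runs through $\{\tau_1,\tau,\tau_3,\tau_4\}$ as $\sigma$ does, so that the assignment $\tau_i(u) \mapsto \tau_i(Bu)$ is $G_A$-equivariant.
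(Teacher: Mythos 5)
Your proof is correct and follows essentially the same route as the paper's, which simply writes ``Here $[L:K]=4$, $H$ has a unique element $\tau$ and $\tau$ has order $2$'' and then invokes Case~2 of Proposition~\ref{diedral} to get $r_A(\Pi)=2[L:K]=8$. You fill in three small things the paper leaves implicit: the orbit--stabilizer argument showing the $G_A$-action on $\sigma(A)$ is free (hence $H$ is a singleton of order~$2$), the explicit computation $[L:K]=4$ from $K(\alpha_1)\subseteq L\subseteq S_{\chi_A}$ with $[K(\alpha_1):K]=\deg\chi_A=4=|G_A|=[S_{\chi_A}:K]$, and the check that the map $B\mapsto(\lambda,\mu)$ is actually a bijection onto $L^2$ (not merely an injection), which is what justifies the equality $r_A(\Pi)=2[L:K]$ rather than an inequality. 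That last verification of $G_A$-equivariance of the recipe $\tau_i(u)\mapsto\tau_i(Bu)$ is genuinely needed and is correctly carried out; it is implicit in the paper's statement ``$B$ depends only on $\lambda,\mu\in L$.''
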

  \begin{proof}
 Again we use the notations of Proposition \ref{diedral}. Here $[L:K]=4$, $H$ has a unique element $\tau$ and $\tau$ has order $2$.
 As in the proof of Proposition \ref{diedral}, Case 2, we show that $r_A(\Pi)=2[L:K]=8$.  
  \end{proof} 
\begin{exam2}{}~
\begin{itemize}
\item  We consider the following instance where $K=\mathbb{Q}$, $\chi_A(x)=x^4+x^3+x^2+x+1$ and $\Pi_{\epsilon}=\ker(A^2+\dfrac{1+\epsilon\sqrt{5}}{2}A+I_4)$ where $\epsilon=\pm 1$. Here $G_A=\mathcal{C}_4$, the element of $H$ has order $2$ and, according to Proposition \ref{C4}, $r_A(\Pi_{\epsilon})=8$. In particular, the following pair $(A,B)$ is such that the planes $\Pi_{\epsilon}$ are invariant for $A,B$ and yet, $A$ and $B$ are not ST.
 $$\;\;\;\;\;\;\;\;\;\;\;A=\begin{pmatrix}0&0&0&-1\\1&0&0&-1\\0&1&0&-1\\0&0&1&-1\end{pmatrix},B=\begin{pmatrix}0&-1&0&2\\-1&-1&1&1\\0&0&0&1\\1&0&0&0\end{pmatrix}\text{ where } G_B=\mathcal{D}_4.$$
 With the help of Theorem \ref{tsatso}, we show that $A,B$ admits only the planes $\Pi_{\epsilon}$ as proper common invariant subspaces over $\mathbb{C}$.\\
 $i)$ Applying the Shemesh's criterion to the couples $(A,B)$ and $(A^T,B^T)$, we conclude that there are no solutions in dimensions $1$ or $3$.\\
$ii)$ We prove easily that $(A+I_4)^{(2)}$ and $(B+I_4)^{(2)}$ have two common eigenvectors 
$$u_{\epsilon}=[1,\dfrac{-\epsilon\sqrt{5}+1}{2},1,\dfrac{-\epsilon\sqrt{5}+1}{2},\dfrac{-\epsilon\sqrt{5}+1}{2},1]^T.$$
An easy calculation shows that $u_{\epsilon}$ is the exterior product of the vectors of a basis of $\Pi_{\epsilon}$.\\ 
 \item Now we assume $K=\mathbb{Z}/7\mathbb{Z}$ and $\chi_A$ is as above. Then $\chi_A$ is irreducible and since $K$ is finite, $G_A=\mathcal{C}_4$. Moreover $5$ is not a square in $K$ and we can define $\sqrt{5}$ in an extension field of $K$ of dimension $2$. Then the planes $\Pi_{\epsilon}$, as above, are $A$-invariant. By the previous reasoning, we obtain $r_A(\Pi_{\epsilon})=8$. The matrices $A,B$, as above, admit the planes $\Pi_{\epsilon}$ as common invariant subspaces. We can show that $A,B$ have no other proper common invariant subspaces. Note that $\chi_B(x)=(x^2-x+4)(x^2+2x+2)$ and $B$ admits two invariant planes over $K$.
 \end{itemize} 
\end{exam2}
 \section{Solving a matrix equation}
 We give an application of Section 3 using the following known result
 \begin{thm2} 
   (McCoy's theorem \cite{3}) Let $L$ be an algebraically closed field and $A,B\in\mathcal{M}_n(L)$. Then $A,B$ are ST over $L$ if and only if for any polynomial $p(\lambda,\mu)$ in non-commuting indeterminates, $p(A,B)(AB-BA)$ is nilpotent.
   \end{thm2}  
    \begin{prop} \label{ST}
  Let $A=\begin{pmatrix}U&0_{p,q}\\0_{q,p}&V\end{pmatrix}\in\mathcal{M}_{p+q}(K)$ be such that $(U,V)\in \mathcal{M}_{p}(K)\times\mathcal{M}_{q}(K)$, $\chi_U$ and $\chi_V$ are  distinct irreducible polynomials over $K$. If $B\in\mathcal{M}_{p+q}(K)$, then $A,B$ are ST over $\overline{K}$ if and only if $B$ is in the form
  $$B=\begin{pmatrix}f(U)&Q\\0_{q,p}&g(V)\end{pmatrix},\text{ respectively }B=\begin{pmatrix}f(U)&0_{p,q}\\Q&g(V)\end{pmatrix}$$
   where $Q\in\mathcal{M}_{p,q}(K),\text{ respectively }Q\in\mathcal{M}_{q,p}(K)$ and $f,g\in K[x]$ are arbitrary.    
    \end{prop}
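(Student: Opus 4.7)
The plan is to prove the two implications separately; the nontrivial direction is the ``only if'' and the core of the argument is a Galois descent.

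The ``if'' direction is an explicit diagonalization. Since $\chi_U$ is irreducible over the perfect field $K$, the matrix $U$ has $p$ distinct eigenvalues in $\overline{K}$ and is diagonalizable by some $P_1 \in GL_p(\overline{K})$; likewise $P_2 \in GL_q(\overline{K})$ diagonalizes $V$. Setting $P = \operatorname{diag}(P_1, P_2)$ makes $P^{-1}AP$ diagonal and $P^{-1}BP = \begin{pmatrix} f(D_U) & P_1^{-1} Q P_2 \\ 0 & g(D_V) \end{pmatrix}$, whose two diagonal blocks are themselves diagonal, so the whole matrix is upper triangular. Hence $A, B$ are ST over $\overline{K}$, and the ``respectively'' form is symmetric.

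For the converse, the first step is to kill one of the off-diagonal blocks. ST over $\overline{K}$ supplies a common eigenvector $w = (v_1, v_2) \in \overline{K}^p \times \overline{K}^q$ of $A, B$; from $Aw = \alpha w$ one gets $Uv_1 = \alpha v_1$ and $V v_2 = \alpha v_2$, and since $\chi_U, \chi_V$ are distinct irreducible polynomials over the perfect field $K$ their spectra are disjoint, so exactly one of $v_1, v_2$ vanishes. Assume $v_2 = 0$ and decompose $B = \begin{pmatrix} B_1 & B_2 \\ B_3 & B_4 \end{pmatrix}$ in blocks of sizes $p, q$; the relation $Bw = \beta w$ yields $B_3 v_1 = 0$. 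Since $B_3$ has entries in $K$, applying any $\tau \in \operatorname{Gal}(\overline{K}/K)$ componentwise gives $B_3 \tau(v_1) = 0$; the irreducibility of $\chi_U$ makes $\operatorname{Gal}(\overline{K}/K)$ transitive on $\sigma(U)$, so the conjugates $\tau(v_1)$ realize a $U$-eigenbasis of $\overline{K}^p$ and force $B_3 = 0$. This Galois descent is the main technical point of the proof.

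It remains to identify the diagonal blocks. With $B_3 = 0$, the subspace $\Pi_1 = K^p \times \{0\}$ is $B$-invariant; intersecting a common ST-flag of $A, B$ with the $\overline{K}$-span of $\Pi_1$ gives a chain whose consecutive dimensions differ by $0$ or $1$, hence after removing repetitions a complete flag on which $A$ acts as $U$ and $B$ as $B_1$. Thus $U$ and $B_1$ are ST over $\overline{K}$, Theorem \ref{comeig} yields $UB_1 = B_1 U$, and Remark \ref{invsp}(ii) gives $B_1 = f(U)$ for some $f \in K[x]$. The analogous argument on the quotient by $\overline{K}^p \times \{0\}$, on which $A$ acts as $V$ and $B$ as $B_4$, yields $B_4 = g(V)$. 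The symmetric case $v_1 = 0$ produces the other block form.
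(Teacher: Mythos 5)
Your proof is correct, and the first half of the ``only if'' direction mirrors the paper's: both start from a common eigenvector guaranteed by ST and use the Galois action on its conjugates to kill the lower-left block (the paper extracts $UP=PU$ at the same stroke, whereas you defer the identification of $B_1$). Where you genuinely diverge is in handling the remaining block and the easy direction. The paper invokes McCoy's theorem twice: once to pass from ``$A,B$ ST'' to ``$p(V,R)(VR-RV)$ nilpotent for all $p$'' (by reading off the lower-right block of $p(A,B)(AB-BA)$), hence to ``$V,R$ ST,'' and once more for the converse. You avoid McCoy entirely: for the ``if'' direction you diagonalize blockwise and observe the conjugated $B$ is upper triangular, and for the ``only if'' you intersect the common complete flag with the now $B$-invariant subspace $\overline{K}^p\times\{0\}$ and pass to the quotient, showing directly that $(U,B_1)$ and $(V,B_4)$ are each ST and then applying Theorem~\ref{comeig} and Remark~\ref{invsp}(ii). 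Your flag-intersection route is more elementary and self-contained (it uses only linear algebra of flags plus the irreducibility hypothesis), at the cost of being slightly longer than the paper's one-line appeal to McCoy; the paper's route has the advantage of transparently reducing the ST question for $A,B$ to the ST question for the smaller pair $V,R$ in a single uniform step.
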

    \begin{proof}
   ($\Rightarrow$) Clearly $\sigma(A)=\{\sigma(U),\sigma(V)\}$ and $A$ has $p+q$ distinct eigenvalues. The eigenvectors of $A$ are in the form $[u,0]^T$ where ${u}^T$ is an eigenvector of $U$ or $[0,v]^T$ where ${v}^T$ is an eigenvector of $V$. Note that $A,B$ have a common eigenvector and assume, for instance, that it is in the form $[u,0]^T$ with $Uu^T=\alpha u^T$. We adapt the proof of Theorem \ref{comeig}: there exist $(\tau_i)_{i=1\cdots p}\in G_U$ such that $\sigma(U)=\{\alpha,\tau_1(\alpha),\cdots,\tau_{p-1}(\alpha)\}$. We deduce that   
    the $([\tau_i(u),0]^T)_{i\leq p}$ are eigenvectors of $B$ and $B$ is in the form $B=\begin{pmatrix}P&Q\\0_{q,p}&R\end{pmatrix}$ where $UP=PU$, that is $P=f(U)\in K[U]$. Then $AB-BA=\begin{pmatrix}0_{p}&UQ-QV\\0_{q,p}&VR-RV\end{pmatrix}$ and, more generally, $$p(A,B)(AB-BA)=\begin{pmatrix}0_{p}&*\\0_{q,p}&p(V,R)(VR-RV)\end{pmatrix}$$
   where $p(\lambda,\mu)$ is any polynomial in non-commuting indeterminates $\lambda,\mu$. According to the McCoy's theorem, $A,B$ are ST over $\overline{K}$ if and only for any polynomial $p(\lambda,\mu)$ in non-commuting indeterminates, $p(V,R)(VR-RV)$ is nilpotent, that is equivalent to : $V,R$ are ST. Then $V,R$ have a common eigenvector and, according to Theorem \ref{comeig}, 
   $VR=RV$, that is $R$ is a polynomial in $V$. \\
      ($\Leftarrow$) Again using the McCoy's theorem, the converse is clear.    
    \end{proof}
         Finally we apply Proposition \ref{ST} to solving a matrix equation.
    \begin{prop}
  Let $p,q$ be distinct positive integers. Let $A\in\mathcal{M}_{p+q}(K)$ be such that $\chi_A=\Phi\Psi$ where $\Phi$ and $\Psi$ are polynomials of degree $p$ and $q$, irreducible over $K$.
   Let $\alpha$ be a positive integer. Then the equation, in the unknown $X\in\mathcal{M}_{p+q}(K)$, 
  \begin{equation} \label{shap} AX-XA=X^{\alpha} \end{equation}
    admits the unique solution $X=0$.
       \end{prop}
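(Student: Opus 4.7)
The plan is to show that $X$ is nilpotent and then to exploit the decomposition $K^{p+q}=V_{1}\oplus V_{2}$ arising from $\chi_{A}=\Phi\Psi$, together with the hypothesis $p\neq q$, to force $X=0$. Here $V_{1}:=\ker\Phi(A)$ and $V_{2}:=\ker\Psi(A)$ are the primary components of $A$, of respective dimensions $p$ and $q$; since $A|_{V_{1}}$ and $A|_{V_{2}}$ have irreducible characteristic polynomials, a standard minimal-polynomial argument shows that the only $K$-invariant subspaces of $A$ are $\{0,V_{1},V_{2},K^{p+q}\}$.

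To see that $X$ is nilpotent, note that $[A,X]=X^{\alpha}$ commutes with $X$ (because $X^{\alpha}\in K[X]$), so the Kleinecke--Shirokov lemma, applied with $X$ in the role of the outer matrix in the double bracket, gives $X^{\alpha}$, and hence $X$, nilpotent. In finite dimensions this can also be done directly from $AX=XA+X^{\alpha}$, which by induction yields $[A,X^{k}]=kX^{\alpha+k-1}$; taking traces gives $\operatorname{tr}(X^{j})=0$ for an infinite set of exponents $j$, and combined with the perfectness of $K$ this yields the nilpotence.

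Next observe that $\ker X$ is $A$-invariant: if $Xv=0$ then $X(Av)=AXv-X^{\alpha-1}(Xv)=0$, so $Av\in\ker X$; a similar short computation shows that $\mathrm{Im}\,X$ is $A$-invariant as well. By the classification above, $\ker X\in\{0,V_{1},V_{2},K^{p+q}\}$, and the case $\ker X=0$ would make $X$ invertible, contradicting its nilpotence.

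It remains to rule out $\ker X=V_{1}$ (the case $\ker X=V_{2}$ being symmetric). Then $\mathrm{rank}(X)=q$, and $\mathrm{Im}\,X$ is an $A$-invariant subspace of dimension $q$; the hypothesis $p\neq q$ forces $\mathrm{Im}\,X=V_{2}$. In the basis adapted to $V_{1}\oplus V_{2}$ the matrix $X$ has the block form $\begin{pmatrix}0&0\\0&Y\end{pmatrix}$ with $Y\in\mathcal{M}_{q}(K)$ bijective, which contradicts the nilpotence of $X$ since $q\geq 1$. Hence $\ker X=K^{p+q}$ and $X=0$. The main obstacle is the nilpotence step: Kleinecke--Shirokov handles it cleanly in characteristic zero, but converting the trace vanishings into nilpotence in positive characteristic requires the perfectness of $K$.
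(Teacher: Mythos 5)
Your proof takes a genuinely different route from the paper's. The paper first invokes its cited earlier work to conclude that $A$ and $X$ are simultaneously triangularizable over $\overline K$, then uses Proposition~\ref{ST} (itself resting on the Galois-theoretic Theorem~\ref{comeig}) to force $X$ into a block-triangular form with polynomial diagonal blocks, and finally eliminates the off-diagonal block by a spectral analysis of a Sylvester equation. You bypass all of that machinery: you classify the $A$-invariant subspaces over $K$ directly from the primary decomposition $K^{p+q}=V_1\oplus V_2$ (correctly — each $V_i$ is a simple $K[A]$-module and they are non-isomorphic since $p\ne q$, so the only submodules are $0,V_1,V_2,K^{p+q}$), observe that $\ker X$ and $\mathrm{Im}\,X$ are $A$-invariant, and then use $p\ne q$ together with nilpotence of $X$ to force $\ker X=K^{p+q}$. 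This is more elementary, avoids Section~3 entirely, and makes the role of the hypothesis $p\ne q$ very transparent; in characteristic zero the whole argument goes through.

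The genuine gap is the nilpotence step in positive characteristic, which the paper's standing hypothesis (``$K$ a perfect field'') allows. Kleinecke--Shirokov is a characteristic-zero theorem. Your trace identity $[A,X^k]=kX^{\alpha+k-1}$ is correct, but it only yields $k\,\mathrm{tr}(X^{\alpha+k-1})=0$; in characteristic $\ell>0$ this gives $\mathrm{tr}(X^j)=0$ only for $j\not\equiv\alpha-1\pmod\ell$, and more fundamentally, vanishing of power-sum traces does not imply nilpotence in positive characteristic (e.g.\ $\mathrm{tr}(I_\ell^j)=0$ for all $j$ over $\mathbb F_\ell$). Perfectness of $K$ is not the relevant property here and does not close the gap. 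Since you invoke nilpotence twice (to rule out $\ker X=0$ and to rule out $\ker X=V_i$), the proof as written is incomplete outside characteristic zero; you would need to establish nilpotence by another route, for instance by importing the simultaneous-triangularizability result from the reference the paper cites, which is precisely what supplies nilpotence in the paper's own argument.
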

    \begin{proof}  We may assume that $A=\begin{pmatrix}U&0_{p,q}\\0_{q,p}&V\end{pmatrix}$ where $U,V$ are the companion matrices of $\Phi,\Psi$.
    Since $X$ satisfies Equation (\ref{shap}), $A$ and $X$ are ST over $\overline{K}$ (cf. \cite{8}). According to Proposition \ref{ST}, necessarily $X$ has two possible forms, for instance this one
    $$X=\begin{pmatrix}f(U)&Q\\0_{p,q}&g(V)\end{pmatrix}\text{ and consequently }AX-XA=\begin{pmatrix}0_{p}&UQ-QV\\0_{q,p}&0_{q}\end{pmatrix}.$$
   $i)$ Assume $\alpha=1$. Equation (\ref{shap}) reduces to 
    $$f(U)=0\;,\;g(V)=0\;,\;UQ-QV=Q.$$
    The last equation can be rewritten $\phi(Q)=Q$ where $\phi=U\otimes I_q-I_p\otimes V^T$ is the sum of two linear functions that commute.
       Therefore 
   $$\sigma(\phi)=\{\lambda-\mu\;|\;\lambda\in\sigma(U),\mu\in\sigma(V)\}.$$
   If there are non-zero solutions, then there exist $\lambda\in\sigma(U),\mu\in\sigma(V)$ such that $\lambda-\mu=1$.
Since $\chi_U$ is the minimum polynomial of $\lambda$ over $K$, then  $\chi_U(x+1)$ is the minimum polynomial of $\mu$ over $K$ and $\chi_U(x+1)=\chi_V(x)$. That implies $p=q$, a contradiction.\\
$ii)$ Assume $\alpha>1$. Equation (\ref{shap}) reduces to 
    $$f(U)^{\alpha}=0\;,\;g(V)^{\alpha}=0\;,\;UQ-QV=0.$$  
    Let $\sigma(U)=(\lambda_i)_{i\leq p}$. Then $(f(\lambda_i))_{i\leq p}=\sigma(f(U))=\{0\}$. Since $f$ is a unitary polynomial of degree $p$, $f(x)=(x-\lambda_1)\cdots(x-\lambda_p)$. By Cayley-Hamilton Theorem, $f(U)=0$ and, in the same way, $g(V)=0$. By the reasonning used in $i)$, for every $\lambda\in\sigma(A),\mu\in\sigma(B)$, $\lambda-\mu\not= 0$ and $\phi$ is a linear bijection. We conclude that $Q=0$.
    \end{proof}

\medskip
 
 \textbf{Acknowledgements.}
 The author thanks David Adam and Roger Oyono for many valuable discussions. The author thanks the referee for helpful comments.

\bibliographystyle{plain}

\end{document}